\newtheorem{theorem}{Theorem}
\newtheorem{lemma}{Lemma}
\theoremstyle{remark}
\newtheorem*{remark}{Remark}
\newcommand{\BPol}{\mathbf{B}_{\Omega}^{\lambda}}
\newcommand{\BKol}{B_{\Omega}^{\lambda}}
\newcommand{\Ltol}{L^2(\Omega,\lambda)}
\newcommand{\Ltola}{L^2_a(\Omega,\lambda)}
\begin{document}

\title[The Bergman Projection Operator On Reinhardt Domains]{An Application of the Pr\'ekopa-Leindler Inequality and Sobolev Regularity of Weighted Bergman Projections}

\author{Yunus E. Zeytuncu}

\address{University of Michigan-Dearborn, Department of Mathematics and Statistics, Dearborn, MI 48128}

\email{zeytuncu@umich.edu}

\subjclass[2010]{Primary 32A25, Secondary 32A36}
\keywords{Weighted Bergman projection, Pr\'ekopa-Leindler inequality, Reinhardt domain}
\thanks{This work was partially supported by a grant from the Simons Foundation (\#353525), and also by a University of Michigan-Dearborn CASL Faculty Summer Research Grant.}

\begin{abstract}
We prove a general version of \cite[Theorem 4.1]{Boas84} to obtain Sobolev estimates for weighted Bergman projections on convex Reinhardt domains by using the Pr\'ekopa-Leindler inequality.
\end{abstract}

\maketitle
\section{Introduction}

\subsection{Setup}

Let $\Omega$ be a bounded domain in $\mathbb{C}^n$ and let $\lambda$ be a positive continuous function on $\Omega$. Let $\Ltol$ denote the space of square integrable functions on $\Omega$ with respect to the measure $\lambda(z)dV(z)$, where $dV(z)$ denotes the Lebesgue measure on $\mathbb{C}^n$. We denote the corresponding norm and inner product by $||\cdot||_{\Omega,\lambda}$ and $\left<\cdot,\cdot\right>_{\Omega,\lambda}$, respectively. The subspace of square integrable holomorphic functions is denoted by $\Ltola$. The  restriction on $\lambda$ guarantees that $\Ltola$ is a closed subspace and therefore the orthogonal projection operator (called the weighted Bergman projection) $\BPol:\Ltol\to\Ltola$ exists (see \cite{Pasternak90}). It follows from the Riesz representation theorem that $\BPol$ is an integral operator $$\BPol f(z)=\int_{\Omega}\BKol (z,w)f(w)\lambda(w)dV(w)$$
where the kernel $\BKol (z,w)$ is called the weighted Bergman kernel.

For any natural number $k$, the (weighted) $L^2$-Sobolev space $W^k(\Omega,\lambda)$ is a subspace of $\Ltol$ with the norm defined by

$$||f||_{k,\lambda}^2=\sum_{|\beta+\gamma|\leq k}\int_{\Omega}\left|\frac{\partial^{\beta+\gamma}}{\partial \overline{z}^{\beta}\partial z^{\gamma}}f(z)\right|^2\lambda(z)dV(z).$$
The values of $k$ for which $\BPol$ is bounded on $W^k(\Omega,\lambda)$ depend on the geometric and potential theoretic properties of the pair $(\Omega,\lambda)$. When $\BPol$ is bounded on $W^k(\Omega,\lambda)$ for all $k\geq 0$, we say $\BPol$ is exactly regular. For the case $\lambda\equiv 1$, we refer to \cite{BoasStraube96} for a comphrensive survey. For results in the weighted setting, we refer to \cite{ChangLi, BonamiGrellier, Ligocka89, Charpentier13, Charpentier14} and the references therein.

One of the well-developed cases is when $\Omega$ is a bounded smooth Reinhardt domain and $\lambda\equiv 1$. In this case, the radial symmetry and smoothness of the boundary are sufficient to conclude exact regularity (see \cite{Straube86} for a proof and \cite{Boas84} for two separate proofs). Under the additional convexity assumption, Boas gives an elementary proof of this fact in \cite{Boas84}. 
The convexity condition on $\Omega$ is used to establish a Brunn-Minkowski type inequality \cite[Lemma 5.2]{Boas84}. It turns out that this inequality is a special case of the Pr\'ekopa's inequality in \cite[Inequality 1.5]{Prekopa71}. The remaining arguments in Boas' proof do not require convexity and do hold on general Reinhardt domains. Therefore, a more general version of the inequality \cite[Lemma 5.2]{Boas84} would lead to a weighted version of \cite[Theorem 4.1]{Boas84}. In this note, we show that the Pr\'ekopa-Leindler inequality \cite[Theorem 7.1]{Gardner} serves this purpose and we obtain a generalization in the weighted setting.

\subsection{Statement}
Let $\Omega$ be a bounded smooth convex Reinhardt domain in $\mathbb{C}^n$. The weights that we consider are of the form $\lambda(z)=f\left(-\rho(z)\right)$, where $\rho$ is a smooth multi-radial (i.e., $\lambda(z_1,\cdots,z_n)=\lambda(|z_1|,\cdots,|z_n|)$) convex  defining function for $\Omega$ and $f:[0,\infty)\to\mathbb{R}$ is a positive, differentiable, and decreasing function. We also assume that $f(x)$ satisfies a convexity condition of the form, 
\begin{equation}\label{convexf}
\frac{\left(f(\frac{x+y}{2})\right)^2}{f(x)f(y)}\geq \delta
\end{equation}
for some $\delta>0$ and for all $x,y\in[0,\infty)$.

We note that the weights of the form above cover many of the settings in the literature. Indeed,
\begin{itemize}
\item if $f\equiv 1$ then we obtain the unweighted setting in \cite{Boas84},
\item if $\Omega$ is the unit disc in $\mathbb{C}^1$, $\rho(z)=|z|^2-1$, and $f(x)=x^a\exp\left(-\frac{b}{x^c}\right)$ for some $a\geq 1, b\geq 0$, and $c\geq 0$, then we obtain weights in \cite{Dostanic04, ZeytuncuSobolev},
\item if $\Omega$ is the unit ball in $\mathbb{C}^n$, $\rho(z)=||z||^2-1$, and $f(x)=x^a$ for some $a\geq 1$, then we are in the setting of \cite{FR, Bek},
\item  if $\Omega$ is the unit ball in $\mathbb{C}^n$, $\rho(z)=||z||^2-1$, and $f(x)=\exp\left(-\frac{1}{x}\right)$, then we recover \cite[Theorem 2]{ZeytuncuZeljko}.
\end{itemize}
Now we state the main result of this note.

\begin{theorem}\label{main}
Let $\Omega$ be a bounded smooth convex Reinhardt domain in $\mathbb{C}^n$ and $\lambda(z)=f\left(-\rho(z)\right)$ where $f$ and $\rho$ are as above.
%
%
Then $\BPol$ is bounded on $W^k(\Omega,\lambda)$ for all $k\in\mathbb{N}$.

\end{theorem}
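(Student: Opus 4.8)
The plan is to follow the strategy of \cite{Boas84} essentially verbatim, replacing the one place where convexity is used --- the Brunn--Minkowski estimate \cite[Lemma 5.2]{Boas84} --- by a weighted analogue obtained from the Pr\'ekopa--Leindler inequality \cite[Theorem 7.1]{Gardner}. First I would record the structural consequences of the symmetry. Since $\lambda$ is multi-radial and $\Omega$ is a convex Reinhardt domain (hence $0\in\Omega$ --- one reaches the origin by halving coordinates through the rotations $\theta_j=\pi$ --- so every monomial lies in $\Ltola$), the monomials $\{z^\alpha\}_{\alpha\in\mathbb N^n}$ form a complete orthogonal system for $\Ltola$, and $\BPol$ acts diagonally: $\BPol g=\sum_\alpha \frac{\langle g,z^\alpha\rangle_{\Omega,\lambda}}{c_\alpha}z^\alpha$, where $c_\alpha:=\|z^\alpha\|_{\Omega,\lambda}^2$. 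Expanding the $W^k$-norm of $\BPol g$ in this basis and using $\partial^\gamma z^\alpha=\frac{\alpha!}{(\alpha-\gamma)!}z^{\alpha-\gamma}$ (the $\bar\partial$-derivatives annihilate monomials), one is reduced to an inequality of the shape
$$\sum_\alpha \frac{|\langle g,z^\alpha\rangle_{\Omega,\lambda}|^2}{c_\alpha^2}\sum_{\gamma\le\alpha,\ |\gamma|\le k}\Big(\tfrac{\alpha!}{(\alpha-\gamma)!}\Big)^{2} c_{\alpha-\gamma}\ \le\ C\,\|g\|_{k,\lambda}^2 .$$
Exactly as in \cite{Boas84}, establishing this uses only the torus action, the decomposition of $g$ into angular Fourier modes, integration by parts in the radial variable, and Hardy-type inequalities that move the differentiations onto $g$; in the process the factors $(\alpha!/(\alpha-\gamma)!)^2$ are absorbed at the cost of introducing neighbouring moments, and everything comes down to a bound, uniform in $\alpha$, essentially of the form
$$\frac{c_{\alpha+e_j}\,c_{\alpha-e_j}}{c_\alpha^{2}}\ \le\ C_0 \qquad (j=1,\dots,n;\ \alpha_j\ge 1),$$
together with finitely many mixed and higher-order variants needed for $k>1$, which follow from it since the moments $\alpha\mapsto c_\alpha$ are automatically log-convex by H\"older's inequality. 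None of this uses convexity, so I would simply cite \cite{Boas84} for these reductions.

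The heart of the matter, and the only place convexity enters, is the uniform moment estimate above --- the weighted counterpart of \cite[Lemma 5.2]{Boas84}. To prove it I would pass to the real picture: by multi-radiality $c_\alpha=(2\pi)^n\int_{R}\prod_j r_j^{2\alpha_j+1}\,f(-P(r))\,dr$, where $R=\Omega\cap(0,\infty)^n$ and $P=\rho|_{\mathbb R^n}$; convexity of $\rho$ makes $R$ convex and $P$ convex on $R$. Equivalently one may foliate by the (convex) level sets $\{-\rho>t\}$ and work with the surface moments $m_\alpha(t)=\int_{\{-\rho=t\}}|z^\alpha|^2\,\frac{dS}{|\nabla\rho|}$, so that $c_\alpha=\int_0^{t_0}f(t)\,m_\alpha(t)\,dt$ with $t_0=\max_{\overline{\Omega}}(-\rho)$. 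Applying the Pr\'ekopa--Leindler inequality \cite[Theorem 7.1]{Gardner} with parameter $\tfrac12$ to the integrand of $c_\alpha$ --- after the natural logarithmic change of variables, using convexity of the domain for the indicator factor and the monotonicity of $f$ together with hypothesis \eqref{convexf} for the weight factor $f(-\rho)$ --- yields the reverse log-convexity $c_\alpha^{2}\ge \delta'\,c_{\alpha-e_j}c_{\alpha+e_j}$ with $\delta'$ depending only on $\delta$ (and $n$), which is exactly the bound required; the mixed and higher-order variants follow in the same way or by combining with the automatic log-convexity.

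I expect the main obstacle to be precisely this last step: arranging the change of variables so that the integrand of $c_\alpha$ is presented as a product to which Pr\'ekopa--Leindler applies cleanly, verifying the pointwise midpoint hypothesis for the correct choice of the three functions (here convexity of $\Omega$ handles the support condition and \eqref{convexf} the weight), and tracking how the constant $\delta$ propagates into $\delta'$ and then into the final $C_0$. One must also be slightly careful near $b\Omega$ and near the coordinate hyperplanes, where $\lambda$ or the monomial factors degenerate, but these cause no real trouble: the relevant integrals converge because $\Omega$ is bounded and $f$ is continuous, and each Fourier mode $g_\alpha$ of $g$ vanishes to the appropriate order at the origin, so the boundary terms produced by the integrations by parts either vanish or are controlled by the lower-order part of $\|g\|_{k,\lambda}$.
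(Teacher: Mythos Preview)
Your overall strategy matches the paper's: carry out Boas's scheme and replace the single convexity-dependent step, the uniform moment estimate, by an application of Pr\'ekopa--Leindler. Two points of execution differ and are worth flagging. First, the reduction: the paper (following \cite[Lemmas~4.2 and~6.1]{Boas84}) argues by duality, introducing the diagonal operator $M_\beta z^\alpha=\tfrac{(\alpha+\beta)!^2}{\alpha!(\alpha+2\beta)!}\tfrac{c_\alpha}{c_{\alpha+\beta}}\,z^{\alpha+2\beta}$ so that $\langle h,\partial^\beta g\rangle_\lambda=\langle\partial^\beta M_\beta h,g\rangle_\lambda$ for holomorphic polynomials, and then moves derivatives onto $f$ by \emph{tangential} (not radial) integration by parts, the crucial observation being $T(\lambda)=0$ for every vector field $T$ tangent to the level sets of $\rho$; this is precisely where the special form $\lambda=f(-\rho)$ is used, and it is a genuine adaptation beyond \cite{Boas84}, so ``simply cite \cite{Boas84}'' does not quite cover the weighted case. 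Second, for the moment inequality the paper works directly in the radial variables rather than passing to logarithmic coordinates: it applies Theorem~\ref{PL} with $t=\tfrac12$ to $h(r)=r^{\zeta+\eta}\lambda(r)\chi_{\mathcal R}$, $f(r)=r^{\zeta}\lambda(r)\chi_{\mathcal R}$, $g(r)=r^{\zeta+2\eta}\lambda(r)\chi_{\mathcal R}$, disposing of $\chi_{\mathcal R}$ by convexity of $\mathcal R$, of $\lambda$ by \eqref{convex}, and of the monomial factor by the elementary one-variable inequality $(\tfrac{u+v}{2})^{a+b}\ge 2^{-b}\sqrt{u^a v^{a+2b}}$; this yields $c_\alpha c_{\alpha+2\beta}\le K_\beta c_{\alpha+\beta}^2$ directly for every $\beta$, avoiding your bootstrap from the nearest-neighbor case via log-convexity. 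Your log-coordinate route is plausible but would require, in addition, that $\rho(e^s)$ be convex in $s$ (or some substitute) to push \eqref{convex} through the change of variables, so the paper's direct argument is the cleaner choice here.
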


%


The new ingredient in the proof of Theorem 2 is the Pr\'ekopa-Leindler inequality (stated below). Readers can find the history and more applications of this inequality in \cite{Gardner}.

\begin{theorem}{\cite[Theorem 7.1]{Gardner}}\label{PL}
Let $0<t<1$ and let $f, g$, and $h$ be nonnegative integrable functions on $\mathbb{R}^n$ satisfying
\begin{equation}\label{hfg}
h((1-t)x+ty)\geq f(x)^{1-t}g(y)^{t}
\end{equation}
for all $x,y\in\mathbb{R}^n$. Then
$$
\int_{\mathbb{R}^n}h(x)dx\geq \left(\int_{\mathbb{R}^n}f(x)dx\right)^{1-t}\left(\int_{\mathbb{R}^n}g(x)dx\right)^{t}~.
$$

\end{theorem}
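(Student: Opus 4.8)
The plan is to prove the inequality by induction on the dimension $n$, with the one-dimensional case serving as the engine and Fubini's theorem handling the passage from $n-1$ to $n$. Throughout I would assume $\int_{\mathbb{R}^n}f>0$ and $\int_{\mathbb{R}^n}g>0$, since otherwise the right-hand side vanishes and nothing is to be proved. For the base case $n=1$ I would first normalize: after a routine truncation reducing to bounded $f$ and $g$, I would divide $f$ by $\sup f$, $g$ by $\sup g$, and $h$ by $(\sup f)^{1-t}(\sup g)^{t}$. This rescaling preserves hypothesis \eqref{hfg} and multiplies both sides of the desired conclusion by the same factor, so I may assume $\sup f=\sup g=1$. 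The key observation is then a containment of superlevel sets: for each $s\in(0,1)$, if $f(x)>s$ and $g(y)>s$, then \eqref{hfg} forces $h((1-t)x+ty)\ge s^{1-t}s^{t}=s$, whence
\[
(1-t)\{f>s\}+t\{g>s\}\subseteq\{h>s\},
\]
with the left-hand side a Minkowski combination of nonempty subsets of $\mathbb{R}$. Applying the one-dimensional Brunn--Minkowski inequality $|A+B|\ge|A|+|B|$ (valid for outer measure, which suffices since the combination lies inside the measurable set $\{h>s\}$) gives $|\{h>s\}|\ge(1-t)|\{f>s\}|+t|\{g>s\}|$.

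With this pointwise-in-$s$ bound in hand, I would integrate over $s$ by the layer-cake formula. Since $\sup f=\sup g=1$, the superlevel sets are empty for $s\ge1$, so $\int_0^1|\{f>s\}|\,ds=\int_{\mathbb{R}}f$ and likewise for $g$; hence
\[
\int_{\mathbb{R}}h=\int_0^\infty|\{h>s\}|\,ds\ge\int_0^1\big[(1-t)|\{f>s\}|+t|\{g>s\}|\big]\,ds=(1-t)\int_{\mathbb{R}}f+t\int_{\mathbb{R}}g.
\]
The weighted arithmetic--geometric mean inequality $(1-t)a+tb\ge a^{1-t}b^{t}$ then converts the right-hand side into $\left(\int f\right)^{1-t}\left(\int g\right)^{t}$, completing the one-dimensional case.

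For the inductive step, assuming the statement in dimension $n-1$, I would slice in the last coordinate. Writing points as $(x',x_n)$ with $x'\in\mathbb{R}^{n-1}$, I fix $a,b\in\mathbb{R}$ and set $c=(1-t)a+tb$. Because $(1-t)(x',a)+t(y',b)=((1-t)x'+ty',c)$, the slices $f_a(x')=f(x',a)$, $g_b(x')=g(x',b)$, and $h_c(x')=h(x',c)$ inherit hypothesis \eqref{hfg} in dimension $n-1$, so the induction hypothesis yields $H(c)\ge F(a)^{1-t}G(b)^{t}$, where $F,G,H$ denote the $(n-1)$-dimensional integrals of the slices viewed as functions of the last coordinate. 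This is precisely hypothesis \eqref{hfg} for the triple $(F,G,H)$ on $\mathbb{R}$, so the already-established one-dimensional case gives $\int_{\mathbb{R}}H\ge(\int F)^{1-t}(\int G)^{t}$, and Fubini's theorem identifies $\int H=\int h$, $\int F=\int f$, and $\int G=\int g$, closing the induction.

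I expect the main obstacle to be the one-dimensional base case—specifically the superlevel-set containment together with the measurability bookkeeping, since a Minkowski combination of measurable sets need not be measurable and must therefore be controlled by outer measure inside the measurable set $\{h>s\}$. The preliminary reduction to bounded $f,g$ and the normalization $\sup f=\sup g=1$ are exactly what make the levels match on both sides of the containment, and getting these steps right is what allows the layer-cake integration to telescope cleanly into $\int f$ and $\int g$.
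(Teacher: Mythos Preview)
The paper does not prove this theorem at all: it is quoted verbatim from \cite[Theorem 7.1]{Gardner} and used as a black box in the proof of Lemma~\ref{coeff}. There is therefore no ``paper's own proof'' to compare against.

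That said, your argument is the standard proof of Pr\'ekopa--Leindler (induction on $n$ via Fubini, with the one-dimensional case handled by level-set containment, the one-dimensional Brunn--Minkowski inequality, the layer-cake formula, and AM--GM), and it is correct. Two small remarks: (i) in the level-set step you actually get the strict inequality $h((1-t)x+ty)>s$, which is what justifies landing in $\{h>s\}$ rather than $\{h\ge s\}$; (ii) the measurability issue is more cleanly resolved by inner approximation---take compact $K\subset\{f>s\}$ and $L\subset\{g>s\}$, so that $(1-t)K+tL$ is compact and contained in $\{h>s\}$, and then pass to the limit---rather than by invoking outer measure. Neither point affects the validity of your outline.
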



\section{Proof of Theorem \ref{main}}

\subsection{Preliminaries}

First, we mention that on a bounded smooth convex Reinhardt domain one can always find a defining function $\rho(z)$ that is smooth multi-radial and convex. Indeed, in \cite{McNealConvex}, authors showed that on a smooth bounded convex domain, there exists a defining function that is convex in a neighborhood of the boundary.   Such a function can be extended to a defining function that is convex on $\Omega$ by using a gauge (Minkowski) function of the domain. The rotational symmetry can be also achieved by defining the function first on the radial image\footnote{The radial image $\mathcal{R}\subset\mathbb{R}^n_+$ of a Reinhardt domain $\Omega\subset\mathbb{C}^n$ is the image of $\Omega$ in the $|z_1|, \cdots, |z_n|$ variables in the real Euclidean space, i.e., the angular variables are suppressed.} of the domain.

Next, we note that the weights of the form above satisfy a convexity condition that will be needed when we invoke Theorem \ref{PL} below. Indeed, for $x,y\in\Omega$ we consider the following ratio:
\begin{align*}
\frac{\left(\lambda(\frac{x+y}{2})\right)^2}{\lambda(x)\lambda(y)}=\frac{\left(f\left(-\rho(\frac{x+y}{2})\right)\right)^2}{f\left(-\rho(x)\right)f\left(-\rho(y)\right)}~.
\end{align*}
Since $\rho$ is a convex function, we have 
$$\rho\left(\frac{x+y}{2}\right)\leq\frac{\rho(x)+\rho(y)}{2}~.$$ 
Furthermore, since $f$ is decreasing and $f$ satisfies \eqref{convexf} we obtain
\begin{align*}
\frac{\left(f\left(-\rho(\frac{x+y}{2})\right)\right)^2}{f\left(-\rho(x)\right)f\left(-\rho(y)\right)}\geq\frac{\left(f\left(\frac{-\rho(x)-\rho(y)}{2}
)\right)\right)^2}{f\left(-\rho(x)\right)f\left(-\rho(y)\right)}\geq\delta>0.
\end{align*}
Therefore, we get
\begin{equation}\label{convex}
\frac{\left(\lambda(\frac{x+y}{2})\right)^2}{\lambda(x)\lambda(y)}\geq \delta>0
\end{equation}
for all $x,y\in \Omega$.\\

%

Many parts of the proof have been already appeared in \cite{Boas84, ZeytuncuSobolev, ZeytuncuZeljko}; however, we repeat them here for completeness. The new ingredient (application of the Pr\'ekopa-Leindler inequality) is in the proof of Lemma \ref{coeff}.

For a multi-index $\gamma$, $d_{\gamma}$ denotes the  $L^2$-norm of the monomial $z^{\gamma}$, that is $$d_{\gamma}^2=\int_{\Omega}\left|z^{\gamma}\right|^2\lambda(z)dV(z).$$  The set of monomials $\left\{\frac{z^{\gamma}}{d_{\gamma}}\right\}$ forms an orthonormal basis of $\Ltola$ and the Bergman kernel $\BKol(z,w)$ is given by the sum $$\BKol(z,w)=\sum_{\gamma}\frac{z^{\gamma}\overline{w^{\gamma}}}{d_{\gamma}^2}.$$

For $j\in\mathbb{N}$, let $\mathbf{S}_j$ denote the truncation operator on $L^2_a(\Omega,\lambda)$; i.e. for $f(z)=\sum_{\alpha}f_{\alpha}z^{\alpha}$
$$\mathbf{S}_jf(z)=\sum_{|\alpha|\leq j}f_{\alpha}z^{\alpha}.$$
Note that $\mathbf{S}_j$ is a bounded operator with operator norm 1. Furthermore, for any holomorphic function $f(z)$ the truncation $\mathbf{S}_jf(z)$ is a polynomial and therefore is in $L^2_a(\Omega,\lambda)$. If the norms of the truncations $\mathbf{S}_jf(z)$ are uniformly bounded then $f(z)$ is also in $L^2_a(\Omega,\lambda)$.

\subsection{Proof of Theorem \ref{main}}
Our goal is to show that for a given multi-index $|\beta|\leq k$ and $f\in W^k(\Omega,\lambda)$, 
\begin{align}\label{derivative}
\left\|\frac{\partial^{\beta}}{\partial z^{\beta}}\BPol f\right\|_{\lambda}^2\lesssim ||f||_{k,\lambda}^2
\end{align}
where the constant is independent of  $f$.

Using the $\mathbf{S}_j$ operator above and the radial symmetry it is clear that 
$$\left\|\frac{\partial^{\beta}}{\partial z^{\beta}}\BPol f\right\|_{\lambda}^2=\lim_{j\to\infty}\left\|\mathbf{S}_j\frac{\partial^{\beta}}{\partial z^{\beta}}\BPol f\right\|_{\lambda}^2.$$
Therefore, it is enough to prove that 
\begin{align}\label{derivative2}
\left\|\mathbf{S}_j\frac{\partial^{\beta}}{\partial z^{\beta}}\BPol f\right\|_{\lambda_{}}^2\lesssim ||f||_{k,\lambda}^2
\end{align}
where the constant is independent of  $f$ and $j$. We will need the following integration by parts lemmas to obtain this inequality.

\begin{lemma}\label{operator}
For a given multi-index $\beta$ there exists a bounded operator  $M_{\beta}^{}$ on $L^2_a(\Omega_{},\lambda_{})$ such that 
$$ \left<h,\frac{\partial^{\beta}}{\partial z^{\beta}}g\right>_{\lambda_{}}= \left<\frac{\partial^{\beta}}{\partial z^{\beta}}M_{\beta}^{}h, g\right>_{\lambda_{}}$$
for all holomorphic polynomials $h$ and $g\in L^2_a(\Omega_{},\lambda_{})$. 
\end{lemma}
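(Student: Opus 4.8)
The plan is to construct $M_\beta$ explicitly as a diagonal operator with respect to the monomial basis $\{z^\gamma/d_\gamma\}$ and to verify the adjoint-type identity by comparing Fourier coefficients. First I would reduce to the case of a single derivative: since $\frac{\partial^\beta}{\partial z^\beta}$ factors as a composition of first-order operators $\frac{\partial}{\partial z_i}$, it suffices to treat $\beta = e_i$ and then iterate, composing the resulting operators $M_{e_i}$. So fix $i$ and consider, for a monomial $z^\gamma$, the computation $\frac{\partial}{\partial z_i} z^\gamma = \gamma_i z^{\gamma - e_i}$. Pairing a monomial $z^\alpha$ against this in the weighted inner product, and using the radial (Reinhardt) symmetry of both $\Omega$ and $\lambda$ — which forces $\langle z^\alpha, z^{\gamma'}\rangle_\lambda = 0$ unless $\alpha = \gamma'$ and equals $d_\alpha^2$ when $\alpha = \gamma'$ — one sees that the only surviving term requires $\alpha = \gamma - e_i$.

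Concretely, I would define $M_{e_i}$ on basis elements by $M_{e_i}(z^\gamma) = c_{\gamma,i}\, z^{\gamma + e_i}$ for suitable constants $c_{\gamma,i}$, chosen so that the identity $\langle h, \frac{\partial}{\partial z_i} g\rangle_\lambda = \langle \frac{\partial}{\partial z_i} M_{e_i} h, g\rangle_\lambda$ holds when $h = z^\alpha$ and $g = z^\gamma$ are monomials; matching the two sides gives $c_{\gamma,i}$ as a ratio of the $d$-constants, namely $c_{\alpha,i}$ is determined by $\alpha_i\, d_{\alpha}^{-?}$... — more precisely, testing both sides on monomials yields $c_{\alpha,i}\,(\alpha_i+1) = (\gamma_i)$ type relations that pin down $c_{\alpha,i} = \frac{d_{\alpha+e_i}^2}{d_\alpha^2}\cdot\frac{1}{\alpha_i + 1}$ (the exact constant will come out of the routine bookkeeping). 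Since monomials are dense in $L^2_a(\Omega,\lambda)$ and both sides of the claimed identity are continuous in $h$ for fixed polynomial-valued $g$-data, extending from monomials to all polynomials $h$ and then to all $g \in L^2_a(\Omega,\lambda)$ is straightforward. The identity for general $\beta$ then follows by composing these first-order identities and setting $M_\beta = M_{e_{i_1}}\cdots M_{e_{i_m}}$ in the appropriate order.

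The one genuine point requiring work — and the main obstacle — is boundedness of $M_\beta$ on $L^2_a(\Omega,\lambda)$, i.e. showing $\sup_\gamma |c_{\gamma,i}| < \infty$, equivalently that the ratio $d_{\gamma+e_i}^2 / d_\gamma^2$ grows at most polynomially (in fact remains comparable to $1$ after dividing by the $\gamma_i+1$ factor) uniformly in $\gamma$. This is exactly where the geometry of $\Omega$ and the weight $f$ enter: bounding $d_{\gamma+e_i}/d_\gamma$ from above and below is a statement about the moments $\int_\Omega |z^\gamma|^2 \lambda\, dV$, and controlling such ratios uniformly is precisely the content of the forthcoming Lemma \ref{coeff}, whose proof invokes the Pr\'ekopa--Leindler inequality via the log-concavity estimate \eqref{convex}. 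Thus I would state Lemma \ref{operator} here, defer the coefficient estimate to Lemma \ref{coeff}, and note that boundedness of $M_\beta$ is an immediate consequence once that estimate is in hand. Everything else — the density argument, the reduction to first order, the explicit diagonal form — is formal manipulation using only the orthogonality of monomials guaranteed by Reinhardt symmetry.
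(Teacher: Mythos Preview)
Your overall strategy matches the paper's: $M_\beta$ is diagonal on the monomial basis, the identity is verified coefficient by coefficient, and boundedness is deferred to Lemma~\ref{coeff}. However, your explicit definition of $M_{e_i}$ fails. With the shift $z^\gamma\mapsto c_{\gamma,i}z^{\gamma+e_i}$, take $h=z^\alpha$ and $g=z^\gamma$: the left side $\langle z^\alpha,\partial_i z^\gamma\rangle_\lambda$ is nonzero only when $\gamma=\alpha+e_i$, whereas your right side $\langle \partial_i(c_{\alpha,i}z^{\alpha+e_i}),z^\gamma\rangle_\lambda=\langle c_{\alpha,i}(\alpha_i+1)z^{\alpha},z^\gamma\rangle_\lambda$ is nonzero only when $\gamma=\alpha$. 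The correct shift is by $2e_i$. In fact the paper bypasses the reduction to first order and the iteration (which, as you wrote it, does not produce $M_\beta=M_{e_{i_1}}\cdots M_{e_{i_m}}$ cleanly anyway, since the $M_{e_i}$'s do not commute with the derivatives $\partial_j$); instead it defines $M_\beta$ directly for general $\beta$ by
\[
M_\beta(z^\alpha)=\frac{(\alpha+\beta)!\,(\alpha+\beta)!}{\alpha!\,(\alpha+2\beta)!}\,\frac{d_\alpha^{\,2}}{d_{\alpha+\beta}^{\,2}}\,z^{\alpha+2\beta}.
\]

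Once the shift is corrected, the quantity governing boundedness is not $d_{\gamma+e_i}/d_\gamma$ (that ratio is trivially bounded on any bounded domain and needs no Pr\'ekopa--Leindler) but rather
\[
\frac{\|M_\beta z^\alpha\|_\lambda}{\|z^\alpha\|_\lambda}=\frac{(\alpha+\beta)!\,(\alpha+\beta)!}{\alpha!\,(\alpha+2\beta)!}\cdot\frac{d_\alpha\, d_{\alpha+2\beta}}{d_{\alpha+\beta}^{\,2}}.
\]
The combinatorial factor is at most $1$, and the remaining moment ratio $d_\alpha d_{\alpha+2\beta}/d_{\alpha+\beta}^{\,2}$ is exactly the inequality~\eqref{key} of Lemma~\ref{coeff}. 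So your instinct to defer to Lemma~\ref{coeff} is right, but the estimate needed is a log-concavity (second-difference) bound on $\gamma\mapsto d_\gamma$, not a first-difference one.
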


\begin{lemma}\label{parts}
For a given multi-index $\beta$ there exists a constant $K_{\beta}$ such that 
$$\left|\left< \frac{\partial^{\beta}}{\partial z^{\beta}}h,f \right>_{\lambda_{}}\right|\leq K_{\beta}||h||_{\lambda_{}}||f||_{|\beta|,\lambda}$$
for  all $f\in W^{|\beta|}(\Omega,\lambda)$ and all holomorphic polynomials $h$.
\end{lemma}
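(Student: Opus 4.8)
The plan is to reduce the estimate to single monomials $h=z^{\gamma}$ and then to manufacture the required derivatives of $f$ by a weighted integration by parts. By the rotational ($n$‑torus) symmetry of $\Omega$ and of $\lambda$, only the Fourier component of $f$ of multidegree $\gamma-\beta$ — call it $g_{\gamma}$ — survives in $\langle\partial^{\beta}z^{\gamma},f\rangle_{\lambda}$, and for distinct $\gamma$ these components occupy distinct Fourier modes, hence are mutually orthogonal in every $W^{k}(\Omega,\lambda)$; so $\sum_{\gamma}\|g_{\gamma}\|_{|\beta|,\lambda}^{2}\le\|f\|_{|\beta|,\lambda}^{2}$. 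Since $\|h\|_{\lambda}^{2}=\sum_{\gamma}|c_{\gamma}|^{2}d_{\gamma}^{2}$ for $h=\sum_{\gamma}c_{\gamma}z^{\gamma}$, Cauchy--Schwarz reduces everything to a monomial estimate
\[
\bigl|\langle\partial^{\beta}z^{\gamma},f\rangle_{\lambda}\bigr|\ \le\ K_{\beta}\,d_{\gamma}\,\|g_{\gamma}\|_{|\beta|,\lambda},
\]
with $K_{\beta}$ independent of $\gamma$ and $f$.

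To prove this I would rewrite the weight as a derivative and integrate by parts. Writing $\lambda(z)=\widetilde\lambda(|z_{1}|^{2},\dots,|z_{n}|^{2})$ and using $\frac{\partial}{\partial\bar z_{j}}\bigl(z_{j}\,g\bigr)=z_{j}^{2}\,\partial_{t_{j}}g$ for multi‑radial $g$ (hence $\partial_{\bar z}^{\mu}(z^{p}g)=z^{p+\mu}\,\partial_{t}^{\mu}g$, $t_{j}=|z_{j}|^{2}$), one has, for the multi‑index $\mu$ with $\mu_{j}=\min(\beta_{j},\gamma_{j}-\beta_{j})$,
\[
z^{\gamma-\beta}\,\widetilde\lambda\ =\ \frac{\partial^{\mu}}{\partial\bar z^{\mu}}\Bigl(z^{\gamma-\beta-\mu}\,\Lambda_{\mu}\Bigr),
\]
where $\Lambda_{\mu}$ is a $\mu$‑fold antiderivative of $\widetilde\lambda$ in the $t_{j}$. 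The freedom in the constants of integration lets me choose $\Lambda_{\mu}$ so that it and all its partial derivatives of order $<|\mu|$ vanish on $\partial\Omega$: near the boundary one integrates transversally in the $(-\rho)$‑direction, which is legitimate because $\rho$ is a defining function. Integrating by parts $|\mu|$ times in $\langle z^{\gamma-\beta},g_{\gamma}\rangle_{\lambda}=\int_{\Omega}\partial_{\bar z}^{\mu}\bigl(z^{\gamma-\beta-\mu}\Lambda_{\mu}\bigr)\overline{g_{\gamma}}\,dV$, moving each $\partial_{\bar z_{j}}$ onto $\overline{g_{\gamma}}$, and using $\partial_{\bar z}^{\mu}\overline{g_{\gamma}}=\overline{\partial_{z}^{\mu}g_{\gamma}}$ together with the fact that $\{z_{j}=0\}$ is interior to $\Omega$, every boundary integral is over $\partial\Omega$ and carries one of the factors arranged to vanish there; hence
\[
\langle\partial^{\beta}z^{\gamma},f\rangle_{\lambda}\ =\ \pm\,\frac{\gamma!}{(\gamma-\beta)!}\int_{\Omega}z^{\gamma-\beta-\mu}\,\Lambda_{\mu}\,\overline{\partial_{z}^{\mu}g_{\gamma}}\;dV ,
\]
with no boundary contribution, and $|\mu|\le|\beta|$, so $\|\partial_{z}^{\mu}g_{\gamma}\|_{\lambda}\le\|g_{\gamma}\|_{|\beta|,\lambda}$.

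Applying Cauchy--Schwarz against $\lambda\,dV$, the monomial estimate follows once
\[
\Bigl(\frac{\gamma!}{(\gamma-\beta)!}\Bigr)^{2}\int_{\Omega}\frac{|z^{\gamma-\beta-\mu}|^{2}\,|\Lambda_{\mu}|^{2}}{\lambda}\;dV\ \lesssim\ \int_{\Omega}|z^{\gamma}|^{2}\lambda\;dV\ =\ d_{\gamma}^{2}
\]
is established uniformly in $\gamma$. The boundary‑vanishing of $\Lambda_{\mu}$ gives $|\Lambda_{\mu}|\lesssim(-\rho)^{|\mu|}\lambda$, so the left side is $\lesssim\bigl(\tfrac{\gamma!}{(\gamma-\beta)!}\bigr)^{2}\int_{\Omega}|z^{\gamma-\beta-\mu}|^{2}(-\rho)^{2|\mu|}\lambda\,dV$; integrating out the torus turns both sides into Euler--Beta‑type integrals in the radial profile $\widetilde\lambda=f(-\widetilde\rho)$, and the combinatorial factor $\gamma!/(\gamma-\beta)!\asymp|\gamma|^{|\beta|}$ is exactly offset by the $2|\mu|$ extra powers of $(-\rho)$ (passing from $z^{\gamma}$ to $z^{\gamma-\beta-\mu}$ costing only a bounded constant, as $\gamma\ge\beta+\mu$). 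This is the step where convexity of $\rho$ is used — to make $-\rho$ comparable to the distance to $\partial\Omega$ and to legitimise the transversal construction of $\Lambda_{\mu}$.

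Summing the monomial estimates then proves Lemma \ref{parts}. I expect this uniform‑in‑$\gamma$ Beta‑integral comparison, together with the construction of $\Lambda_{\mu}$ on a general convex Reinhardt domain, to be the only real obstacles; the overall scheme is as in \cite{Boas84, ZeytuncuSobolev, ZeytuncuZeljko}.
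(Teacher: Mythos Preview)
Your approach diverges substantially from the paper's and leaves its hardest step --- what you call the ``uniform-in-$\gamma$ Beta-integral comparison'' --- essentially unproved. The paper avoids monomial decompositions entirely: near $\partial\Omega$ one writes each $\partial/\partial z_j$, acting on a holomorphic function, as a \emph{tangential} first-order operator $\mathcal{L}_j$ (using a frame $L_1,\dots,L_{n-1},L_n+\overline{L}_n$ tangent to the level sets of $\rho$ and the Cauchy--Riemann equations), observes that $\mathcal{L}_j\lambda=0$ because $\lambda=f(-\rho)$ depends only on the defining function, and integrates by parts directly. Tangentiality kills all boundary terms and $\mathcal{L}_j\lambda=0$ lets the weight pass through, so iteration gives $\langle\partial^\beta h,f\rangle_\lambda=\langle h,\widetilde{\mathcal{L}_\beta}f\rangle_\lambda$ for an order-$|\beta|$ operator with $C^\infty(\overline\Omega)$ coefficients; Cauchy--Schwarz finishes. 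No antiderivatives of $\lambda$, no uniform-in-$\gamma$ estimates, and --- contrary to what you assert --- no convexity: in the paper convexity enters only in the coefficient lemma via Pr\'ekopa--Leindler, not in this one.

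Beyond the acknowledged obstacle, your scheme has further gaps. The antiderivative $\Lambda_\mu$ must satisfy $\partial_t^\mu\Lambda_\mu=\widetilde\lambda$ \emph{and} $\partial_t^{\mu'}\Lambda_\mu|_{\partial\mathcal{R}}=0$ for every $\mu'<\mu$; for mixed $\mu$ (more than one nonzero entry) this is an overdetermined family of boundary conditions on a single hypersurface, and an iterated integral in the $t_j$ variables with base points on $\partial\mathcal{R}$ does not obviously meet all of them, since $\partial\mathcal{R}$ is not a coordinate box. The bound $|\Lambda_\mu|\lesssim(-\rho)^{|\mu|}\lambda$ also does not follow from boundary vanishing alone; it needs a monotonicity of $\lambda$ along the integration paths and a comparison of the $t_j$-direction with the normal. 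Finally, for a general convex Reinhardt domain and weights like $f(x)=\exp(-1/x)$, the claimed offset between $(\gamma!/(\gamma-\beta)!)^2$ and the $(-\rho)^{2|\mu|}$ factor is far from a routine Beta-integral computation. These issues may be surmountable, but the tangential-vector-field argument sidesteps them in a few lines.
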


Assuming these two lemmas for now, we obtain \eqref{derivative2} as follows. Let $h\in L^2_a(\Omega,\lambda)$, 
\begin{align*}
\left\|\mathbf{S}_j\frac{\partial^{\beta}}{\partial z^{\beta}}\BPol f\right\|_{\lambda_{}}^2&=\sup_{||h||_{\lambda}\leq 1}
\left| \left<h, \mathbf{S}_j \frac{\partial^{\beta}}{\partial z^{\beta}}\BPol f \right>_{\lambda_{}}\right|^2.
\end{align*}
On the other hand, 
\begin{align*}
\left| \left<h, \mathbf{S}_j \frac{\partial^{\beta}}{\partial z^{\beta}}\BPol f \right>_{\lambda_{}}\right|
&=\left| \left<\mathbf{S}_jh, \frac{\partial^{\beta}}{\partial z^{\beta}}\BPol f \right>_{\lambda_{}}\right|\\
&=\left| \left<  \frac{\partial^{\beta}}{\partial z^{\beta}}M_{\beta}^{}\mathbf{S}_jh,\BPol f
 \right>_{\lambda_{}}\right| ~ \text{ by Lemma \ref{operator}}\\
 &=\left| \left<  \frac{\partial^{\beta}}{\partial z^{\beta}}M_{\beta}^{}\mathbf{S}_jh,f
 \right>_{\lambda_{}}\right| ~ \text{ by self adjointness}\\
 &\lesssim ||M_{\beta}\mathbf{S}_jh||_{\lambda_{}}~||f||_{k,\lambda}~\text{ by Lemma \ref{parts}}\\
&\lesssim ||\mathbf{S}_jh||_{\lambda_{}}~||f||_{k,\lambda}~\text{ by Lemma \ref{operator}}\\
&\leq ||h||_{\lambda}~||f||_{k,\lambda}
\end{align*}
where the constants are clearly independent of $j$ and $f$. In order to finish the proof of Theorem \ref{main}, it remains to prove the lemmas above.

\subsection{Proof of Lemma \ref{parts}} The same lemma has appeared in \cite[Lemma 21]{ZeytuncuZeljko}, which is in fact a rendition of \cite[Lemma 2.1]{Straube86} and \cite[Lemma 6.1]{Boas84}. We go over the arguments briefly for completeness, and refer to \cite{ZeytuncuZeljko} for more details.

If $f$ is supported on a compact subset of $\Omega$ then the estimate follows easily from the Bergman inequality. Hence we assume $f$ is supported in a small neighborhood of the boundary of $\Omega$. Furthermore, we choose this neighborhood such that we can find smooth orthonormal vector fields $L_1,\cdots, L_n$ with the property that $L_1,\cdots, L_{n-1}$ and $L_n+\overline{L}_n$ are tangent to the boundary of $\Omega$. As a consequence of the Cauchy-Riemann equations, we can write any derivative of a holomorphic polynomial $h$ in terms of these vector fields; indeed, there exist $c_{ij}\in C^{\infty}({\overline{\Omega}})$ such that 
$$\frac{\partial}{\partial z_j}h=\left(\sum_{i=1}^{n-1}c_{ij}L_i+c_{nj}(L_n+\overline{L_n})\right)h=:\mathcal{L}_jh~.$$
Also note that by the choice of $\lambda=f\left(-\rho\right)$, $T(\lambda)=0$ for any tangential vector field $T$. This means, we have
\begin{align*}
\left< \frac{\partial}{\partial z_j}h, f\right>_{\lambda}&=\left< \mathcal{L}_j(h), f\right>_{\lambda}~\text{ for some tangential vector field } \mathcal{L}_j\\ 
&=\left< \mathcal{L}_j(h)\lambda, f\right>\\
&=\left< \mathcal{L}_j(h\lambda), f\right>~\text{ since }\mathcal{L}_j(\lambda)=0\\
&=\left< h\lambda,  \widetilde{\mathcal{L}_j}(f)\right>~\text{ no boundary terms since }\mathcal{L}_j\text{ is tangential }\\
&=\left< h, \widetilde{\mathcal{L}_j}(f)\right>_{\lambda}~
\end{align*}
where $\widetilde{\mathcal{L}_j}$ is a first order differential operator with $C^{\infty}({\overline{\Omega}})$ coefficients. For a multi-index $\beta$, if we iterate this argument $|\beta|$ times we get,
\begin{align*}
\left< \frac{\partial^{\beta}}{\partial z^{\beta}}h, f\right>_{\lambda}
=\left< h, \widetilde{\mathcal{L}_{\beta}}(f)\right>_{\lambda}
\end{align*}
for some differential operator $\widetilde{\mathcal{L}_{\beta}}$ of order $|\beta|$ with $C^{\infty}(\overline{\Omega})$ coefficients. We conclude the proof of Lemma \ref{parts} by the H\"older's inequality.

\subsection{Proof of Lemma \ref{operator}} This lemma is stated in \cite[Lemma 4.2]{Boas84} for $\lambda\equiv 1$. We define $M_{\beta}^{}$ as follows. For a monomial $z^{\alpha}$, we set

$$M_{\beta}^{}(z^{\alpha})=\frac{(\alpha+\beta)!(\alpha+\beta)!}{\alpha!(\alpha+2\beta)!} \frac{d_{ \alpha}^2}{d_{ \alpha+\beta}^2}z^{\alpha+2\beta}~.$$
The explicit expression of $M_{\beta}$ is imposed by the orthonormality of the set $\left\{\frac{z^{\gamma}}{d_{\gamma}}\right\}$, and the point of the lemma is to prove the continuity of $M_{\beta}$.

For this purpose, we compute the norms $$\frac{||M_{\beta}^{}(z^{\alpha})||_{\lambda_{}}}{||z^{\alpha}||_{\lambda_{}}}=\frac{(\alpha+\beta)!(\alpha+\beta)!}{\alpha!(\alpha+2\beta)!}\frac{d_{ \alpha}d_{ \alpha+2\beta}}{d_{ \alpha+\beta}^2}$$
the first fraction is uniformly bounded since
\begin{align*}
\frac{(\alpha+\beta)!(\alpha+\beta)!}{\alpha!(\alpha+2\beta)!}=\frac{\binom{\alpha+\beta}{\beta}}{\binom{\alpha+2\beta}{\beta}}\leq 1.
\end{align*}

\noindent It remains to prove that the second fraction 
$$ \frac{d_{ \alpha}d_{ \alpha+2\beta}}{d_{ \alpha+\beta}^2} $$
is uniformly bounded. This is a consequence of the Pr\'ekopa-Liendler inequality as explained in the next lemma.

\begin{lemma}\label{coeff}
For a given multi-index $\beta$ there exists a constant $K_{\beta}$ such that 
\begin{align}\label{key}
d_{ \alpha}d_{ \alpha+2\beta}\leq K_{\beta}(d_{ \alpha+\beta})^2
\end{align}
for all multi-indices $\alpha$.
\end{lemma}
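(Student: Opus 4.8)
The plan is to deduce \eqref{key} from the Pr\'ekopa--Leindler inequality (Theorem \ref{PL}) with $t=\tfrac12$, working not in $\Omega$ but in its radial image $\mathcal{R}\subset\mathbb{R}^n_{+}$. Passing to polar coordinates in each variable gives, for every multi-index $\gamma$,
$$d_\gamma^2=(2\pi)^n\int_{\mathcal{R}}\textstyle\prod_j r_j^{2\gamma_j+1}\,\lambda(r)\,dr.$$
Two facts about $\mathcal{R}$ enter. First, $\mathcal{R}$ is convex: if $r,r'\in\mathcal{R}$ then the positive points $(r_1,\dots,r_n)$ and $(r_1',\dots,r_n')$ lie in $\Omega$ by the Reinhardt symmetry, so their midpoint lies in $\Omega$ by convexity, and its coordinatewise modulus is $\tfrac{r+r'}{2}$. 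Second, the positive points of $\Omega$ are exactly $\mathcal{R}$, so the convexity estimate \eqref{convex} applies at any $x,y\in\mathcal{R}$ and at their midpoint. Squaring \eqref{key}, it suffices to prove $d_\alpha^2\,d_{\alpha+2\beta}^2\le K_\beta^2\,d_{\alpha+\beta}^4$.

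To this end I would apply Theorem \ref{PL} on $\mathbb{R}^n$ with $t=\tfrac12$ to the three nonnegative functions, all understood to vanish off $\mathcal{R}$:
\begin{align*}
F(r)&=\textstyle\prod_j r_j^{2\alpha_j+1}\lambda(r),\qquad
G(r)=\textstyle\prod_j r_j^{2\alpha_j+4\beta_j+1}\lambda(r),\\
H(r)&=\textstyle\frac{4^{|\beta|}}{\sqrt{\delta}}\prod_j r_j^{2\alpha_j+2\beta_j+1}\lambda(r).
\end{align*}
By the formula above, $\int F=(2\pi)^{-n}d_\alpha^2$, $\int G=(2\pi)^{-n}d_{\alpha+2\beta}^2$, and $\int H=\tfrac{4^{|\beta|}}{\sqrt\delta}(2\pi)^{-n}d_{\alpha+\beta}^2$. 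The crux is to verify the hypothesis $H\!\left(\tfrac{x+y}{2}\right)\ge F(x)^{1/2}G(y)^{1/2}$ for all $x,y\in\mathbb{R}^n$. This is automatic unless $x,y\in\mathcal{R}$; in that case put $u=\tfrac{x+y}{2}\in\mathcal{R}$ and split the monomial part of $F(x)^{1/2}G(y)^{1/2}=\prod_j x_j^{\alpha_j+\frac12}y_j^{\alpha_j+2\beta_j+\frac12}\,\lambda(x)^{1/2}\lambda(y)^{1/2}$ as $\prod_j (x_jy_j)^{\alpha_j+\frac12}\cdot\prod_j y_j^{2\beta_j}$. The two elementary inequalities $x_jy_j\le u_j^2$ (AM--GM) and $y_j\le 2u_j$ (valid because $x_j\ge 0$) give $\prod_j(x_jy_j)^{\alpha_j+\frac12}\prod_j y_j^{2\beta_j}\le 4^{|\beta|}\prod_j u_j^{2\alpha_j+2\beta_j+1}$, while \eqref{convex} gives $\lambda(x)^{1/2}\lambda(y)^{1/2}\le\delta^{-1/2}\lambda(u)$. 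Multiplying these yields $F(x)^{1/2}G(y)^{1/2}\le H(u)$, as needed.

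With the hypothesis in hand, Theorem \ref{PL} gives $\int H\ge(\int F)^{1/2}(\int G)^{1/2}$, i.e. $\tfrac{4^{|\beta|}}{\sqrt\delta}\,d_{\alpha+\beta}^2\ge d_\alpha\,d_{\alpha+2\beta}$, which is \eqref{key} with $K_\beta=4^{|\beta|}\delta^{-1/2}$, independent of $\alpha$. I expect the only delicate point to be the simultaneous choice of the three functions \emph{and} of the radial coordinates: the monomial factor of the geometric mean $F^{1/2}G^{1/2}$ breaks into a piece $\prod_j(x_jy_j)^{\alpha_j+1/2}$ dominated by the $d_\alpha^2$-density $\prod_j u_j^{2\alpha_j+1}$ and a ``cross term'' $\prod_j y_j^{2\beta_j}$, and it is precisely the nonnegativity of the radial coordinates that lets one absorb the cross term into the extra factor $\prod_j u_j^{2\beta_j}$ that upgrades the $d_\alpha^2$-density to the $d_{\alpha+\beta}^2$-density; in the logarithmic image of $\Omega$ this absorption would fail, the cross term there being unbounded. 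Everything else is bookkeeping — matching exponents so that AM--GM and \eqref{convex} close exactly, checking that $F,G,H$ are integrable, and confirming that the midpoint of two points of $\mathcal{R}$ again lies in $\mathcal{R}$ — all of which rests on the boundedness and convexity of $\Omega$ recorded in the Preliminaries.
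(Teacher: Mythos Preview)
Your proof is correct and follows essentially the same approach as the paper: both reduce to the radial image $\mathcal{R}$, apply Theorem~\ref{PL} with $t=\tfrac12$ to three functions built from the monomial densities times $\lambda\cdot\chi_{\mathcal{R}}$, handle $\lambda$ via \eqref{convex}, and handle the monomials via AM--GM together with $y_j\le 2u_j$. The only differences are presentational---you track the polar Jacobian $+1$ and the explicit constant $K_\beta=4^{|\beta|}\delta^{-1/2}$, whereas the paper leaves the exponents schematic and reduces the monomial estimate to a one-variable computation (obtaining $K=2^b$ per coordinate, which matches your $4^{|\beta|}$).
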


\begin{proof}
We want to show that there exists $K_{\beta}>0$ such that 
\begin{equation}\label{coeffi}
\int_{\Omega}|z^{\alpha}|^2\lambda(z)dV(z)\int_{\Omega}|z^{\alpha+2\beta}|^2\lambda(z)dV(z)\leq K_{\beta}\left(\int_{\Omega}|z^{\alpha+\beta}|^2\lambda(z)dV(z)\right)^2~.
\end{equation}

Recall that $\Omega$ is a Reinhardt domain and all the functions inside the integrals are multi-radial; therefore, the integration is taking place on the radial image $\mathcal{R}$ of $\Omega$.  We use $r$ to denote the vector in $\mathbb{R}^n$. We define three functions
\begin{align*}
h(r)&=r^{\zeta+\eta}\lambda(r)\chi_{\mathcal{R}}(r)\\
f(r)&=r^{\zeta}\lambda(r)\chi_{\mathcal{R}}(r)\\
g(r)&=r^{\zeta+2\eta}\lambda(r)\chi_{\mathcal{R}}(r)
\end{align*}
where $\zeta$ and $\eta$ are multi-indices and $\chi_{\mathcal{R}}(r)$ stands for the characteristic function of the convex set $\mathcal{R}$. We verify the condition \eqref{hfg} in Theorem \ref{PL}. Namely, we claim
$$K_{\eta}h\left(\frac{x+y}{2}\right)\geq \sqrt{f(x)g(y)}$$
for all $x,y\in\mathcal{R}$ and for some $K_{\eta}>0$. That is, we claim
$$K_{\eta}\left(\frac{x+y}{2}\right)^{\zeta+\eta}\lambda\left(\frac{x+y}{2}\right)\chi_{\mathcal{R}}\left(\frac{x+y}{2}\right)\geq \sqrt{x^{\zeta}y^{\zeta+2\eta}\lambda(x)\lambda(y)\chi_{\mathcal{R}}(x)\chi_{\mathcal{R}}(y)}.$$
We can eliminate $\chi_{\mathcal{R}}$'s from both sides since the set $\mathcal{R}$ is convex. We can also eliminate $\lambda$'s from both sides by the assumption \eqref{convex}. Therefore it remains to show that 
$$K_{\eta}\left(\frac{x+y}{2}\right)^{\zeta+\eta}\geq \sqrt{x^{\zeta}y^{\zeta+2\eta}}.$$
Furthermore, it is enough to prove this in one dimension since the multi dimensional version follows by iteration. 

In other words, we claim that for $u,v\geq0$ and $a,b\in\mathbb{N}$, there exists $K>0$ (independent of $a$) such that 
$$K\left(\frac{u+v}{2}\right)^{a+b}\geq \sqrt{u^av^{a+2b}}.$$
We rewrite this inequality as follows
\begin{align*}
K\left(\frac{u+v}{2}\right)^{a+b}&\geq \sqrt{u^av^{a+2b}}\\
K\left(\frac{u+v}{2}\right)^{a}\left(\frac{u+v}{2}\right)^{b}&\geq \sqrt{u^av^{a}}v^b\\
K\left(\frac{\frac{\sqrt{u}}{\sqrt{v}}+\frac{\sqrt{v}}{\sqrt{u}}}{2}\right)^{a}\left(\frac{\frac{u}{v}+1}{2}\right)^{b}&\geq1\\
\end{align*}
If we set $t=\frac{\sqrt{u}}{\sqrt{v}}$ and observe that $\frac{t+t^{-1}}{2}\geq 1$ on $(0,\infty)$, then we conclude the desired inequality by choosing $K=2^b$.

This means we can invoke Theorem \ref{PL} for the functions $h, f, g$ and appropriate multi-indices $\zeta$ and $\eta$ to obtain \eqref{coeffi}. This concludes the proof of Lemma \ref{coeff} and the proof of Theorem \ref{main}. 
\end{proof}

\begin{remark}
In the unweighted setting (i.e. $\lambda\equiv 1$) the exact regularity holds without any convexity or pseudoconvexity assumption as demonstrated in \cite{Straube86}. Therefore, it is plausible to think that in the weighted setting, the radial symmetry of the domain and weight should be sufficient to conclude boundedness on Sobolev spaces. However, our proof of Theorem \ref{main} does require convexity of the domain and weight, since we invoke the Pr\'ekopa-Leindler inequality. It will be a curious project to investigate whether one can drop the convexity (or even the pseudoconvexity) assumption on $\Omega$ and obtain regularity for arbitrary weights.\\
\end{remark}

\section*{Acknowledgment} 
I'd like to thank anonymous referees for constructive comments. I'd also like to thank Bo Berndtsson for his enlightening lectures on complex Brunn-Minkowski theory at the Analysis and Geometry in Several Complex Variables Conference at Texas A\&M University at Qatar. I'd also like to thank the organizers of this conference, Shiferaw Berhanu, Nordine Mir and Emil J. Straube for a memorable time in Qatar.


\bibliographystyle{ams}

\end{document}